\documentclass{amsart}
\usepackage{amsmath}

  \textheight=8.2 true in
   \textwidth=5.0 true in
    \topmargin 30pt
     \setcounter{page}{1}



\newtheorem{theorem}{Theorem}[section]

\newtheorem{lemma}[theorem]{Lemma}

\theoremstyle{definition}

\title[Volume-surface reaction-diffusion] 
      {Global existence and uniqueness for a volume-surface reaction-nonlinear-diffusion
system}

\author[K. Disser]{}

\subjclass{Primary: 35K61, 35K57 ; Secondary: 35B45, 35A01.}
 \keywords{Volume-surface system, reaction-diffusion
system, $L^{\infty}$-estimates, entropy method, nonlinear diffusion.}

 \email{kdisser@mathematik.tu-darmstadt.de}



\begin{document}
\maketitle

\centerline{\scshape Karoline Disser$^*$}
\medskip
{\footnotesize
 \centerline{FB Mathematik, TU Darmstadt,}
   \centerline{ Schlossgartenstr. 7,}
   \centerline{ 64293 Darmstadt, Germany}
} 



\bigskip


\begin{abstract}
We prove a global existence, uniqueness and regularity result for a two-species reaction-diffusion
volume-surface system that includes nonlinear bulk diffusion and nonlinear
(weak) cross diffusion on the active surface. A key feature is a proof
of upper $L^{\infty}$-bounds that exploits the entropic gradient
structure of the system.
\end{abstract}

\section{Introduction}

There has been a lot of recent interest in the mathematical analysis
of reaction-diffusion processes that are coupled across volume and
surface domains, due to their relevance in particular in the modelling
of biological cells and technological processes, e.g. involving
catalysis by surfactants \cite{BKMS17, BFL18, FKMMN16, Glitzky12}. For systems coupled across volume
and surface, a particular modelling issue and mathematical difficulty
is the nonlinearity of diffusion that is typical of many biological
processes (due to porous media, non-Newtonian flow, ...) and naturally
associated to diffusion that is modelled on multiple (spatial) scales
(\cite{Bothe15, GM13, Keil13, KB08, Mielke13}). The aim of this paper is to prove a global well-posedness result
for reaction-diffusion volume-surface systems that include typical
general classes of nonlinear diffusion (like slow and fast diffusion)
and nonlinear (weak) cross diffusion on the active surface. This generalizes
some of the recent results on volume-surface reaction-diffusion \cite{BKMS17, BFL18, FKMMN16} to the nonlinear diffusion case and to the case that only part
of the boundary of the bulk domain is active. 

Besides the well-posedness result based on the analysis for quasilinear volume-surface systems in \cite{Disser17}, the aim of the paper is to introduce a proof of $L^{\infty}$-estimates that is different from the
classical proof of comparison principles for the linearized part \cite{BFL18, FKMMN16}. Instead, we explicitly exploit the entropic gradient
structure of the system \cite{GM13} to show that the (chemical) logarithmic potentials remain pointwise bounded. This translates to
pointwise estimates on the concentrations. A feature of
this estimate is that it does not use the positivity of the (quasi)linear
diffusion.  The method was inspired by the boundedness-by-entropy
method developed by J\"ungel \cite{Juengel15}, but the argument here is more direct
and thus does not promise as much generality. So it is unclear whether
a strategy of this type can also be used to obtain results on more
complex cases of multicomponent diffusion or reaction. This is the
aim of future work. 

\subsection{Model equations}\label{model}
We consider the reaction-cross-diffusion system 
\begin{align}
\dot{u}-\mathrm{div}(\mu(u)\nabla u) & =0,\qquad\text{in }(0,\infty)\times\Omega,\nonumber \\
\mu(u)\frac{\partial u}{\partial\nu} & =0,\qquad\text{on }(0,\infty)\times\partial\Omega\setminus\Gamma,\nonumber \\
\mu(u)\frac{\partial u}{\partial\nu}+k\alpha(u^{\alpha}-\kappa v^{\beta}) & =0,\qquad\text{on }(0,\infty)\times\Gamma,\label{eq:System1}\\
\dot{v}-\mathrm{div}_{\Gamma}(\mu_{\Gamma}(u,v)\nabla_{\Gamma}v)-k\beta(u^{\alpha}-\kappa v^{\beta}) & =0,\qquad\text{on }(0,\infty)\times\Gamma,\nonumber \\
\mu_{\Gamma}(u,v)\frac{\partial v}{\partial\nu_{\Gamma}} & =0,\qquad\text{on }(0,\infty)\times\partial\Gamma,\nonumber  
\end{align}
with initial conditions $u(0,x)=u_{0}(x)>0,x\in\Omega$ and $v(0,y)=v_{0}(y)>0,y\in\Gamma.$
Here, $\Omega\subset\mathbb{R}^{d}$,$d=2,3$ is a bounded strong
Lipschitz domain, $\Gamma\subset\partial\Omega$ is an open connected part of its
boundary, $\nu$ is the unit outer normal vector field at $\partial\Omega$
and if $d=3$, $\nu_{\Gamma}$ is the unit outer normal vector field
at $\partial\Gamma$ (for a (canonical) definition and discussion
of the operators $\mathrm{div}_{\Gamma}$ and $\nabla_{\Gamma}$ in
this non-smooth case, see e.g. \cite{DMR15}). Locally uniformly
in $(u,v)\in\mathbb{R}_{+}\times\mathbb{R}_{+}$, the diffusion coefficients
$\mu(\cdot,u)$, $\mu_{\Gamma}(\cdot,u,v)$ are measurable, bounded
and elliptic. Uniformly in $x\in\Omega,y\in\Gamma$, $u\mapsto\mu(x,u)$
and $(u,v)\mapsto\mu_{\Gamma}(y,u,v)$ are locally Lipschitz. Examples are slow and fast diffusion like
\[
\mu(x,u)=u^{\gamma}\mathrm{Id}_{d},\quad\mu(x,u)=e^{\delta u}\mathrm{Id}_{d},\quad\gamma,\delta\in\mathbb{R}.
\]
Correspondingly,
these and expressions like
\[
\mu_{\Gamma}(u,v)=\frac{v}{\alpha u+\beta v}\mathrm{Id}_{d-1},
\]
are admissible for the diffusion coefficient on the surface. With natural boundary conditions,
starting from positively bounded initial values, no degeneracy will
develop for this system. The reversible chemical reaction on the surface
is described by mass-action kinetics with forward and backward rates
$k,\kappa>0$ and stoichiometric coefficients $\alpha,\beta\geq1$. 

\subsection{Outline}
Section \ref{sec:Preliminary-results-and} on preliminaries is mostly technical, in that
it is used to introduce the notation and arguments needed to fit
system (\ref{eq:System1}) in the framework of the main result in
\cite{Disser17} on existence and uniqueness for quasilinear dissipative bulk-interface
dynamics. It comes before the main result as it is needed for the
detailed/precise statement of the main result in Theorem \ref{thm:mr} in Section \ref{sec:Main-result}.
The $L^{\infty}$-estimate derived from the entropic gradient
structure of system (\ref{eq:System1}) is shown in Section \ref{sec:-estimates-based-on}. 

\section{\label{sec:Preliminary-results-and}Preliminary results and notation}

Due to the quasilinear structure of system (\ref{eq:System1}),
global existence and uniqueness do not follow from standard linearization
techniques, if only $L^{\infty}$ bounds are available
globally a priori. To see that system (\ref{eq:System1}) fits
with the theory in \cite{Disser17}, for convenience and in order to give a precise statement of
the main result, we introduce some of the functional analytic framework used for volume-surface systems
in \cite{Disser17}. 

\subsection{Notation and function spaces}
For $q,q_{\Gamma}\in[1,\infty]$ and $0\leq\alpha,\alpha_{\Gamma}<1$,
define 
\begin{align*}
\mathrm{L}^{q,q_{\Gamma}} & :=L^{q}(\Omega)\times L^{q}(\Gamma),\\
\mathrm{W}^{1,q,q_{\Gamma}} & :=W^{1,q}(\Omega)\times W^{1,q}(\Gamma),\\
\mathrm{C}^{\alpha,\alpha_{\Gamma}} & :=C^{\alpha}(\Omega)\times C^{\alpha_{\Gamma}}(\Gamma),
\end{align*}
with components $\mathrm{L}^{q,q_{\Gamma}}\ni\varphi=(\varphi_{u},\varphi_{v})$,
where $L^{q}$, $W^{1,q}$, $C^{\alpha}$ denote the usual Lebesgue,
Sobolev and Hölder spaces. In the following, $q'$ is the dual exponent
of $q$ with $1=\frac{1}{q}+\frac{1}{q'}$ and we denote dual Sobolev
spaces by
\[
W^{-1,q}(\Omega)=\left(W^{1,q'}(\Omega)\right)',W^{-1,q_{\Gamma}}(\Gamma)=\left(W^{1,q'_{\Gamma}}(\Gamma)\right)'
\]
and 
\[
\mathrm{W}^{-1,q,q_{\Gamma}}:=\left(\mathrm{W}^{1,q',q'_{\Gamma}}\right)'.
\]
Let $(u,v)\in\mathrm{C}^{0,0}$ with $u,v>0$. Then using the assumptions
on $\mu,\mu_{\Gamma}$, the bilinear form $\mathfrak{l}_{u,v}\colon\mathrm{W}^{1,2,2}\times\mathrm{W}^{1,2,2}\to\mathbb{R}$
given by 
\[
\mathfrak{l}_{u,v}(\psi,\varphi):=\int_{\Omega}\nabla\psi_{u}\cdot\mu(u)\nabla\varphi_{u}\,\mathrm{d}x+\int_{\Gamma}\nabla_{\Gamma}\psi_{v}\cdot\mu(u,v)\nabla\varphi_{v}\,\mathrm{d}y
\]
is well-defined, continuous and coercive (after shifting). It induces
the two-component Neumann Laplacian $\mathcal{L}_{u,v}\colon\mathrm{W}^{1,2,2}\to\mathrm{W}^{-1,2,2}$
with $\mathcal{L}_{u,v}(\psi)(\varphi):=\mathfrak{l}_{u,v}(\psi,\varphi)$.
For $q,q_{\Gamma}\in[2,\infty)$, let $\mathcal{L}_{u,v}^{q,q_{\Gamma}}$
be the closed and densely defined restriction of $\mathcal{L}_{u,v}$
to $\mathrm{W}^{-1,q,q_{\Gamma}}$. 

\subsection{Interpretation of (\ref{eq:System1})}
In order to re-write system (\ref{eq:System1})
as a quasilinear abstract Cauchy problem in $\mathrm{W}^{-1,q,q_{\Gamma}}$,
consider the reaction terms as components of a functional $\mathcal{F}(u,v)\in\mathrm{W}^{-1,q,q_{\Gamma}}$
through
\[
\mathcal{F}(u,v)(\varphi)=\left(\begin{array}{l}
\mathcal{F}_{u}(u,v)(\varphi)\\
\mathcal{F}_{v}(u,v)(\varphi)
\end{array}\right)=\left(\begin{array}{l}
-\alpha\int_{\Gamma}k(u^{\alpha}-\kappa v^{\beta})\varphi_{u}|_{\Gamma}\,\mathrm{d}y\\
+\beta\int_{\Gamma}k(u^{\alpha}-\kappa v^{\beta})\varphi_{v}\,\mathrm{d}y
\end{array}\right),
\]
for all $\varphi\in\mathrm{W}^{1,q',q_{\Gamma}'}$. With the existence
of the traces $u|_{\Gamma}\in C^{0}(\Gamma)$ and $\varphi_{u}|_{\Gamma}\in L^{q'}(\Gamma)$,
$\mathcal{F}(u,v)$ is well-defined for all $t>0$. Note that this
is just the usual way of casting inhomogeneous Neumann boundary conditions
in a weak form. Hence, system (\ref{eq:System1}) can be written as
the abstract Cauchy problem 
\begin{equation}
\left(\begin{array}{c}
\dot{u}\\
\dot{v}
\end{array}\right)(t)+\mathcal{L}_{u(t),v(t)}^{q,q_{\Gamma}}(u(t),v(t))=\mathcal{F}(u(t),v(t))\label{eq:System2}
\end{equation}
in $\mathrm{W}^{-1,q,q_{\Gamma}}$ with initial data $(u,v)(0)=(u_{0},v_{0})$.
Due to the uniqueness and regularity of solutions (Theorem \ref{thm:mr}),
solutions of (\ref{eq:System2}) solve system (\ref{eq:System1})
in an adequate sense. 

\subsection{Regularity results}
To solve (\ref{eq:System2}) on any time interval
$J_{T}=(0,T),T>0$, we use that for $r\in(1,\infty)$, $\mathcal{L}_{u,v}^{q,q_{\Gamma}}$
has \emph{maximal parabolic regularity} in $L^{r}(J_{T};\mathrm{W}^{-1,q,q_{\Gamma}})$
and that there are $q>d$, $q_{\Gamma}>2$ such that $\mathrm{dom}(\mathcal{L}_{u,v}^{q,q_{\Gamma}}+\lambda)=\mathrm{W}^{1,q,q_{\Gamma}}$
for $\lambda>0$ (for details and proofs, see \cite{Disser17}[Sect. 2.3]). 

For this, if $d=3$, we use the following additional assumption on
$\mu$: it is of the form $\mu(x,u)=\mu_{\circ}(u) m(x)$,
where $\mu_{\circ}\colon\mathbb{R}_{+}\to\mathbb{R}_{+}$ is a scalar
function that is locally Lipschitz and $m\colon\Omega\to\mathbb{R}_{>0,\mathrm{sym}}^{3\times3}$
is continuous (for a slight generalization, see \cite{Disser17}[Sect. 2.2(4)]). 

We define 
\[
\mathrm{MR}_{q,q_{\Gamma}}^{r}:=L^{r}(J_{T};\mathrm{W}^{1,q,q_{\Gamma}})\cap W^{1,r}(J_{T};\mathrm{W}^{-1,q,q_{\Gamma}})
\]
as the space of solutions corresponding to the setting of system (\ref{eq:System2})
and 
\[
\mathrm{X}_{q,q_{\Gamma}}^{r}:=(\mathrm{W}^{1,q,q_{\Gamma}},\mathrm{W}^{-1,q,q_{\Gamma}})_{1-\frac{1}{r},r}
\]
as the corresponding time trace space, satisfying
\[
\mathrm{MR}_{q,q_{\Gamma}}^{r}\hookrightarrow C^{0}(J_{T};\mathrm{X}_{q,q_{\Gamma}}^{r}).
\]
The following embedding result is essential \cite{Disser17}[Lemma 2.4(2)]: If
$q>d$, $q_{\Gamma}>d-1$ and $r>\max(\frac{2q}{q-d},\frac{2q_{\Gamma}}{q_{\Gamma}-d+1})$,
then 
\begin{equation}
\mathrm{X}_{q,q_{\Gamma}}^{r}\hookrightarrow\mathrm{C}^{\beta,\beta_{\Gamma}}\hookrightarrow\mathrm{C}^{0,0},
\end{equation}\label{Xembedding}
for $0<\beta\leq1-\frac{d}{q}-\frac{2}{r}$ and $0<\beta_{\Gamma}\leq1-\frac{d-1}{q_{\Gamma}}-\frac{2}{r}$.
In particular, this implies that for any $(u,v)\in\mathrm{MR}_{q,q_{\Gamma}}^{r}$,
at any $t>0$, the linearized operator $\mathcal{L}_{u(t),v(t)}^{q,q_{\Gamma}}$
is well-defined.

\subsection{Equilibria}
Note that the system (\ref{eq:System1}) preserves the total mass
\[
\mathrm{m}=\beta\int_{\Omega}u(t,x)\,\mathrm{d}x+\alpha\int_{\Gamma}v(t,y)\,\mathrm{d}y.
\]
Given initial values $(u_{0},v_{0})\geq0$ with mass 
\[
\mathrm{m}=\beta\int_{\Omega}u_{0}(x)\,\mathrm{d}x+\alpha\int_{\Gamma}v_{0}(y)\,\mathrm{d}y,
\]
the Wegscheider conditions 
\begin{align}
\mathrm{m} & =\beta\vert\Omega\vert u_{*}+\alpha\vert\Gamma\vert v_{*},\nonumber \\
0 & =u_{*}^{\alpha}-\kappa v_{*}^{\beta},\label{eq:Wegscheider2}
\end{align}
determine uniquely the equilibrium $(u_{*},v_{*})\in\mathbb{R}_{+}^{2}$
corresponding to $(u_{0},v_{0})$ (to see this here, note that the
function $v\mapsto\beta\vert\Omega\vert\kappa^{1/\alpha}v^{\beta/\alpha}+\alpha\vert\Gamma\vert v_{*}$
is strictly monotone and thus invertible from $\mathbb{R}_{+}$ to
$\mathbb{R}_{+}$.) 

\section{Main result\label{sec:Main-result}}
\begin{theorem}
\label{thm:mr} There exist $q>d$, $q_{\Gamma}>2$ such that for
all $r>\max\left(\frac{2q}{q-d},\frac{2q_{\Gamma}}{q_{\Gamma}-d+1}\right)$,
and for all positive $(u_{0},v_{0})\in\mathrm{X}_{q,q_{\Gamma}}^{r}$,
there exists a unique global positive solution $(u,v)\in\mathrm{MR}_{q,q_{\Gamma}}^{r}$
of system (\ref{eq:System2}).
\end{theorem}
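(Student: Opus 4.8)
The plan is to cast system \eqref{eq:System2} as a quasilinear abstract Cauchy problem and apply the abstract existence/uniqueness theory for quasilinear dissipative bulk-interface dynamics from \cite{Disser17}, then upgrade the local solution to a global one by a priori bounds. First I would verify the structural hypotheses of that abstract theory: by the discussion in Section \ref{sec:Preliminary-results-and}, for suitable $q>d$ and $q_{\Gamma}>2$, the linearized operator $\mathcal{L}_{u,v}^{q,q_{\Gamma}}+\lambda$ has maximal parabolic regularity on $\mathrm{W}^{-1,q,q_{\Gamma}}$ with domain $\mathrm{W}^{1,q,q_{\Gamma}}$, and the coefficient map $(u,v)\mapsto\mathcal{L}_{u,v}^{q,q_{\Gamma}}$ is continuous (in fact locally Lipschitz) as a function of $(u,v)\in\mathrm{C}^{0,0}$ by the local Lipschitz assumptions on $\mu,\mu_\Gamma$; the embedding $\mathrm{X}_{q,q_{\Gamma}}^{r}\hookrightarrow\mathrm{C}^{0,0}$ from \eqref{Xembedding} (valid under the stated lower bound on $r$) makes this coupling admissible and ensures $\mathcal{L}_{u(t),v(t)}^{q,q_{\Gamma}}$ is well-defined along any trajectory in $\mathrm{MR}_{q,q_{\Gamma}}^{r}$. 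One also checks that $\mathcal{F}$ maps $\mathrm{C}^{0,0}$ (with positive entries) locally Lipschitz-continuously into $\mathrm{W}^{-1,q,q_{\Gamma}}$, using the trace theorem $\mathrm{W}^{1,q',q_\Gamma'}\to L^{q'}(\Gamma)$ as indicated after the definition of $\mathcal{F}$, so that the right-hand side is a lower-order, locally Lipschitz perturbation. Invoking \cite{Disser17} then yields, for every positive $(u_0,v_0)\in\mathrm{X}_{q,q_{\Gamma}}^{r}$, a unique maximal solution $(u,v)\in\mathrm{MR}_{q,q_{\Gamma}}^{r}$ on a maximal interval $(0,T_{\max})$.

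The next step is to establish positivity and the a priori estimates needed to rule out $T_{\max}<\infty$. Positivity of $u$ and $v$ follows from the structure of the reaction and the natural boundary conditions (the reaction flux has the sign that prevents concentrations from crossing zero, a standard quasi-positivity argument), so the solution stays in the region where the logarithmic entropy variables are defined. Conservation of the total mass $\mathrm{m}=\beta\int_\Omega u+\alpha\int_\Gamma v$ — obtained by testing \eqref{eq:System2} with the constant function $(\beta,\alpha)$, where the bulk and surface diffusion terms vanish and the reaction contributions cancel by the stoichiometry — gives a uniform $L^1$ bound. The crucial ingredient, supplied by Section \ref{sec:-estimates-based-on}, is the $L^\infty$-estimate derived from the entropic gradient structure: testing with the chemical potentials $(\log u,\log v)$ produces an entropy–dissipation inequality, and exploiting this structure one shows the logarithmic potentials remain pointwise bounded from above on $(0,T_{\max})$, hence $u(t),v(t)$ are uniformly bounded in $\mathrm{C}^{0,0}$ (equivalently, in $L^\infty(\Omega)\times L^\infty(\Gamma)$) independently of $t$.

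With a time-uniform $L^{\infty}$ bound in hand, I would bootstrap to a time-uniform bound in the trace space $\mathrm{X}_{q,q_{\Gamma}}^{r}$: on the set where $(u,v)$ is confined to a fixed compact positive interval the diffusion coefficients are uniformly elliptic and bounded and the reaction term is uniformly Lipschitz, so maximal parabolic regularity gives an estimate for $\|(u,v)\|_{\mathrm{MR}_{q,q_{\Gamma}}^{r}(0,T)}$ depending only on the data and on $T$ in a non-exploding way, and hence, via $\mathrm{MR}_{q,q_{\Gamma}}^{r}\hookrightarrow C^0(J_T;\mathrm{X}_{q,q_{\Gamma}}^{r})$, a bound on $\|(u,v)(t)\|_{\mathrm{X}_{q,q_{\Gamma}}^{r}}$ that does not blow up as $t\uparrow T_{\max}$. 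The standard continuation criterion of the abstract theory then forces $T_{\max}=\infty$, and restarting from any time yields the global solution in $\mathrm{MR}_{q,q_{\Gamma}}^{r}$ on every $J_T$. The main obstacle is the $L^\infty$-bound: unlike in the linear-diffusion case one cannot invoke a comparison principle for the linearized operator, and since the estimate is required to be independent of the (degenerating-looking) diffusion, it must come purely from the entropic structure — controlling the sup-norm of $\log u$ and $\log v$ from the entropy dissipation without using positivity or ellipticity of the quasilinear part is the delicate point, and it is exactly what the argument in Section \ref{sec:-estimates-based-on} is designed to deliver.
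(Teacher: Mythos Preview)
Your overall strategy---verify the hypotheses of \cite{Disser17}, check local Lipschitzianity of $\mathcal{F}$ via the embedding $\mathrm{X}^r_{q,q_\Gamma}\hookrightarrow\mathrm{C}^{0,0}$, and globalize using the $L^\infty$-estimate of Section~\ref{sec:-estimates-based-on}---is the paper's. Where you diverge is in the globalization mechanism: you obtain a maximal local solution and invoke a continuation criterion after bootstrapping the $L^\infty$-bound back to an $\mathrm{X}^r_{q,q_\Gamma}$-bound, whereas the paper appeals directly to a Schaefer fixed-point argument built into \cite{Disser17} (Corollary~3.6 there), which yields global existence in one shot but requires the $L^\infty$-bounds uniformly for the whole family of problems with right-hand side $\lambda\mathcal{F}$ and initial data $\lambda(u_0,v_0)$, $0<\lambda\le 1$. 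Both routes are legitimate here; yours is the more classical one, while the paper's sidesteps the bootstrap step you sketch.

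One point you underplay: you need a uniform positive \emph{lower} bound on $(u,v)$, not merely positivity from a quasi-positivity argument, since coefficients like $\mu(u)=u^\gamma$ may degenerate or blow up at zero. Lemma~\ref{lem:LinftyEst} does supply this lower bound, but via a separate $L^2$-comparison argument, not the entropy method. Relatedly, the paper does not derive the bounds along a local solution of the original system as you propose: it first replaces $\mu,\mu_\Gamma,f$ by cut-off versions $\mu_l^L,\mu_{\Gamma,l}^L,f_0$ that are uniformly elliptic and globally Lipschitz by construction, proves global existence and the bounds \eqref{eq:bounds} for that auxiliary system, and then observes that the truncation is inactive so that the auxiliary solution also solves \eqref{eq:System2}. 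Your direct approach is viable, but this truncation trick is precisely what makes the circularity (needing bounds to have a solution on which to prove bounds) disappear, and it should be made explicit.
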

The proof is based on the main result in \cite{Disser17} that provides global
existence and uniquenss in the functional analytic framework of system
(\ref{eq:System2}) if the right-hand-side $\mathcal{F}$ is Lipschitz
and preserves $L^{\infty}$-bounds (a posteriori). Regarding regularity
of solutions, note that 
\begin{equation}
\mathrm{MR}_{q,q_{\Gamma}}^{r}\hookrightarrow C^{\delta}(J_{T};\mathrm{C}^{\gamma,\gamma_{\Gamma}})\label{eq:Hoelder}
\end{equation}
for suitable $\delta,\gamma,\gamma_{\Gamma}>0$ \cite{Disser17}[Lemma 2.4(3)]
and note that $\mathrm{W}^{1,q,q_{\Gamma}}\hookrightarrow \mathrm{X}_{q,q_{\Gamma}}^{r}$
by definition (as an upper limit on the regularity required of
the initial data -- with parabolic smoothing, less than $\mathrm{X}_{q,q_{\Gamma}}^{r}$
can be shown to be sufficient). 
\begin{proof}
To be precise, we first note how the assumptions of Theorem 3.1 in
\cite{Disser17} can be satisfied (there may be several choices): in \cite{Disser17}, set $m_{\Gamma}=m_{+}=m_{-}=0$,
$\Omega_{+}=\Omega$, $\Omega_{-}=\emptyset$, $u_{+}=u$, $u_{\Gamma}=v$,
$f_{+}=h_{+}=h_{\Gamma}=0$, $g_{+}(u,v)=-k\alpha(u^{\alpha}-\kappa v^{\beta})$,
$f_{\Gamma}(u,v)=k\beta(u^{\alpha}-\kappa v^{\beta})$, $k_{+}(u)=\mu(u)$,
and $k_{\Gamma}(u,v)=\mu_{\Gamma}(u,v)$. Then the assumptions in
Section 2.1 in \cite{Disser17} are satisfied and it remains to check Assumption 2.5 there
(local Lipschitzianity of $\mathcal{F}$): for $(u_{1},v_{1}),(u_{2},v_{2})\in\mathrm{X}_{q,q^{\Gamma}}^{r}$
with $\Vert(u_{1},v_{1})\Vert_{\mathrm{x}_{q,q_{\Gamma}}^{r}},\Vert(u_{2},v_{2})\Vert_{\mathrm{x}_{q,q_{\Gamma}}^{r}}\leq\tilde{L}$,
we have 
\[
\Vert(u_{1},v_{1})\Vert_{\mathrm{L}^{\infty}},\Vert(u_{2},v_{2})\Vert_{\mathrm{L}^{\infty}}\leq L_{\infty},
\]
by embedding (\ref{Xembedding}), so 
\begin{align*}
\Vert\mathcal{F}(u_{1},v_{1})-\mathcal{F}(u_{2},v_{2})\Vert_{\mathrm{W}^{-1,q,q_{\Gamma}}} & \leq C\Vert u_{1}^{\alpha}-u_{2}^{\alpha}+\kappa(v_{2}^{\beta}-v_{1}^{\beta})\Vert_{L^{\infty}(\Gamma)}\\
 & \leq C(L_{\infty})\Vert(u_{1}-u_{2}),(v_{1}-v_{2})\Vert_{\mathrm{L}^{\infty}}\\
 & \leq C(\tilde{L})\Vert(u_{1}-u_{2}),(v_{1}-v_{2})\Vert_{\mathrm{X}_{q,q_{\Gamma}}^{r}}.
\end{align*}
The global result Theorem \ref{thm:mr} below follows from Theorem 3.1 in \cite{Disser17} as in Corollary
3.6 there, using Lemma \ref{lem:LinftyEst} below on $L^{\infty}$-bounds.
In fact, a Schaefer fixed point argument is used for the proof of
existence of solutions. It requires uniform $L^{\infty}$ -bounds
for solutions of system (\ref{eq:System2}) with right-hand-side $\lambda\mathcal{F}(u,v)$,
$0<\lambda\leq1$ and initial value $\lambda(u_{0},v_{0})$. Clearly,
this also follows from the proof of Lemma (\ref{lem:LinftyEst}).
\end{proof}

\section{$L^{\infty}$-estimates based on the entropic gradient structure\label{sec:-estimates-based-on}}

The main result of this section is the following Lemma. It is sufficient
for proving Theorem \ref{thm:mr}, but also the aim is to introduce
a method that derives pointwise upper bounds from the entropy-producing
structure of the reaction, without using the diffusivity. 
\begin{lemma}
\label{lem:LinftyEst}Let $r,q,q_{\Gamma}$ be as in Theorem \ref{thm:mr}
and let $0<(u_{0},v_{0})\in \mathrm{X}_{q,q_{\Gamma}}^{r}$. Then the corresponding
equilibrium $(u_{*},v_{*})$ is positive and there are constants $c_{u},c_{v},C_{u},C_{v}>0$
such that 
\[
0<c_{u}\leq\frac{u_{0}}{u_{*}}\leq C_{u}<+\infty,\qquad0<c_{v}\leq\frac{v_{0}}{v_{*}}\leq C_{v}<+\infty
\]
for all $x\in\Omega,y\in\Gamma$. Assume that $(u,v)\in \mathrm{MR}_{q,q_{\Gamma}}^{r}$
is a solution of (\ref{eq:System2}) with initial data $(u_{0},v_{0})$.
Define $l:=\min(c_{u}^{\alpha},\kappa c_{v}^{\beta})$ and $L:=\max(C_{u}^{\alpha},C_{v}^{\beta})$,
then 
\begin{equation}
\left(\frac{u(t,x)}{u_{*}}\right)^{\alpha},\left(\frac{v(t,y)}{v_{*}}\right)^{\beta}\leq L,\text{ and }l\leq u^{\alpha}(t,x),\kappa v^{\beta}(t,x),\label{eq:bounds}
\end{equation}
for all $t\geq0$, $x\in\Omega$ and $y\in\Gamma$. 
\end{lemma}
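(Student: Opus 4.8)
The plan is to exploit the entropic gradient structure. Define the (relative) chemical potentials $\mu_u := \log(u/u_*)$ on $\Omega$ and $\mu_v := \log(v/v_*)$ on $\Gamma$, so that the reaction term $u^\alpha - \kappa v^\beta = u_*^\alpha(e^{\alpha\mu_u} - e^{\beta\mu_v})$ after using the Wegscheider relation $u_*^\alpha = \kappa v_*^\beta$. The first step is the statement about initial data: positivity of $(u_*,v_*)$ was already established in Section~\ref{sec:Preliminary-results-and} via strict monotonicity, and since $(u_0,v_0)\in\mathrm{X}_{q,q_\Gamma}^r\hookrightarrow\mathrm{C}^{0,0}$ by embedding (\ref{Xembedding}) and is strictly positive on the compact sets $\overline\Omega$, $\overline\Gamma$, it attains positive minimum and finite maximum; dividing by the positive constants $u_*,v_*$ gives the claimed $c_u,c_v,C_u,C_v$.

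For the pointwise bounds I would use the structure of the equations for $u$ and $v$ as $\partial_t u = \mathrm{div}(\mu(u)\nabla u)$ with a nonlinear Robin-type flux on $\Gamma$, and similarly for $v$. The key trick is to test the weak formulation against a suitable function of $u$ and $v$ that measures the excess over the bound $L$. Concretely, for the upper bound let $w := \alpha\log(u/u_*)$ and $z := \beta\log(v/v_*)$ and consider the functional $E_+(t) := \int_\Omega (w - \log L)_+^2\,\mathrm dx + \int_\Gamma (z-\log L)_+^2\,\mathrm dy$ (or a $p$-th power version, letting $p\to\infty$ at the end, à la Alikakos/Moser — but the cleaner route here is to test directly with $(w-\log L)_+$ and $(z-\log L)_+$ as the $\varphi_u,\varphi_v$ in the weak formulation). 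The diffusion contributions then have a sign: $\int_\Omega \nabla(w-\log L)_+\cdot\mu(u)\nabla u\,\mathrm dx = \int_\Omega \mu(u)\,(\alpha/u)\,|\nabla u|^2\,\mathbf 1_{\{w>\log L\}}\ge 0$ on the set where the potential exceeds its bound (this is exactly where the ellipticity of $\mu$, but crucially not its positivity alone, enters only through a sign), and likewise for the surface diffusion. The reaction terms on $\Gamma$ combine into $-\int_\Gamma k(u^\alpha-\kappa v^\beta)\big[\alpha(w-\log L)_+ - \beta(z-\log L)_+\big]\,\mathrm dy$, up to the bookkeeping constants $k\alpha,k\beta$; since $u^\alpha - \kappa v^\beta = u_*^\alpha(e^{w}-e^{z})$ and the function $s\mapsto(s-\log L)_+$ is nondecreasing, the quantity $(e^w - e^z)\big[(w-\log L)_+ - (z-\log L)_+\big]$ is pointwise $\ge 0$, which makes the whole reaction contribution have the right (dissipative) sign. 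Hence $\frac{d}{dt}E_+ \le 0$; since $E_+(0)=0$ by the choice of $L$ from the initial-data bounds, $E_+\equiv 0$ and the upper bounds follow for a.e., then (by Hölder continuity (\ref{eq:Hoelder})) every $t,x,y$.

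For the lower bound I would run the mirror-image argument: with $l=\min(c_u^\alpha,\kappa c_v^\beta)$, test against $-(w-\log l)_-$ and $-(z-\log l)_-$ (i.e. the negative parts of the potentials relative to $\log l$); the diffusion terms again acquire the favourable sign on $\{w<\log l\}$, and the monotonicity of $s\mapsto(s-\log l)_-$ makes $(e^w-e^z)\big[(w-\log l)_- - (z-\log l)_-\big]\le 0$ so that the reaction term again contributes with the dissipative sign after the overall minus. This yields $\frac{d}{dt}\big(\int_\Omega(w-\log l)_-^2 + \int_\Gamma(z-\log l)_-^2\big)\le 0$, which is zero initially, hence $u^\alpha\ge l$ and $\kappa v^\beta\ge l$ for all times. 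Care is needed that these test functions are admissible: $(w-\log L)_+ \in W^{1,2}(\Omega)$ because $u\in\mathrm{X}_{q,q_\Gamma}^r\hookrightarrow\mathrm{MR}$ is Hölder and bounded away from $0$ on each finite interval a priori (from the quasilinear existence theory in \cite{Disser17}), so $\log(u/u_*)$ is a Lipschitz function of $u$ there and the chain/truncation rules apply; the boundary trace $(w-\log L)_+|_\Gamma$ then matches $(z$-version$)$ appropriately in the coupling integral.

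The main obstacle, and the point requiring the most care, is the rigorous justification that differentiating $E_+$ in time is legitimate and that $\frac{d}{dt}\int_\Omega(w-\log L)_+^2$ equals $2\langle\dot u,\,(\alpha/u)(w-\log L)_+\rangle_{W^{-1,q},W^{1,q'}}$ — i.e. an integration-by-parts/chain-rule identity in the $\mathrm{MR}_{q,q_\Gamma}^r$ setting, connecting the abstract evolution in $\mathrm{W}^{-1,q,q_\Gamma}$ to the pointwise entropy functional. This is the standard but delicate step in entropy methods: one approximates $\log$ by Lipschitz, bounded, $C^1$ functions $\phi_n$, applies the chain rule for $\phi_n$ (valid since $\phi_n(u)\in\mathrm{MR}$ with a clean time derivative), and passes to the limit using the a priori Hölder bounds and the uniform-on-compacts positivity of $u,v$ that the local existence theory provides before the global bound is known. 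Once that identity is in place the rest is the sign bookkeeping sketched above. A secondary, minor subtlety is that $\Gamma$ is only an open connected strong-Lipschitz piece of $\partial\Omega$, so one must use the tangential calculus conventions of \cite{DMR15} and note that the natural boundary condition on $\partial\Gamma$ contributes nothing — which is exactly why no boundary term on $\partial\Gamma$ appears in the surface-diffusion integration by parts.
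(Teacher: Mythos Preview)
Your upper-bound strategy --- test the weak formulation with truncated chemical potentials and use the entropy-dissipating structure --- is precisely the paper's. However, your execution has a real gap in the reaction-sign step. When you test with $\varphi_u=(w-\log L)_+$, $\varphi_v=(z-\log L)_+$, the reaction contribution is
\[
-\int_\Gamma k\,u_*^\alpha(e^{w}-e^{z})\bigl[\alpha(w-\log L)_+ - \beta(z-\log L)_+\bigr]\,\mathrm{d}y,
\]
and you silently replace the bracket by $(w-\log L)_+ - (z-\log L)_+$ to invoke monotonicity of $s\mapsto(s-\log L)_+$. The stoichiometric factors $\alpha,\beta$ cannot be dropped: for $\alpha=1$, $\beta=2$, $\log L=1$, $w=3$, $z=5/2$ one gets $e^w-e^z>0$ but $\alpha(w-1)_+-\beta(z-1)_+=2-3<0$, so the integrand has the wrong sign. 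The paper repairs this by testing with $\xi^L=\tfrac1\alpha(w-\log L)_+$ and $\chi^L=\tfrac1\beta(z-\log L)_+$, so that $\alpha\xi^L-\beta\chi^L=(w-\log L)_+-(z-\log L)_+$ and on the set where both are positive the product is a perfect square. Correspondingly, the functional whose time derivative equals $\langle\dot u,\xi^L\rangle+\langle\dot v,\chi^L\rangle$ is \emph{not} your $E_+=\int(w-\log L)_+^2+\int(z-\log L)_+^2$ but the truncated weighted entropy $\mathcal E_L=L^{1/\alpha}\!\int_\Omega u_*\,e(u^L/u_*)+L^{1/\beta}\!\int_\Gamma v_*\,e(v^L/v_*)$ with $e(s)=s\ln s-s+1$; the weights $L^{1/\alpha},L^{1/\beta}$ are exactly what make the ``both large'' case close up. There is also an internal inconsistency in your write-up: you compute the diffusion term for the test function $(w-\log L)_+$, but later correctly identify $\tfrac{d}{dt}E_+$ with testing against $\tfrac{\alpha}{u}(w-\log L)_+$; with the latter test function the diffusion integrand is $\mu\,u^{-2}|\nabla u|^2\bigl[\alpha-(w-\log L)\bigr]\mathbf 1_{\{w>\log L\}}$, which has no sign once $w>\log L+\alpha$.

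For the lower bound the paper does \emph{not} run your mirror-image entropy argument. It uses a classical $L^2$ comparison against the constant stationary pair $(\sigma_u,\sigma_v)$ with $\sigma_u^\alpha=\kappa\sigma_v^\beta=l$: test with $(u-\sigma_u)_-$, $(v-\sigma_v)_-$, bound the cross terms in the reaction via the mean value theorem and the trace inequality $\|u_-\|_{L^2(\Gamma)}\le C\|u_-\|_{H^1(\Omega)}$, and close by Gronwall. Your entropy mirror would inherit the same $\alpha,\beta$ defect and additionally needs $u,v>0$ just to write $\log(u/u_*)$; the $L^2$ route avoids both issues.
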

\begin{proof}
First note that since $\mathrm{m}>0$, $(u_{*},v_{*})$ is positive, and 
the embedding (\ref{Xembedding})
implies that $(u_{0},v_{0})$ are positively bounded from above and
below. 

To show that the reaction in (\ref{eq:System1}) is entropy-producing, the reaction rate is reformulated as
\begin{align}
f(u,v): & =k(u^{\alpha}-\kappa v^{\beta})\nonumber \\
 & =\Lambda(u^{\alpha},\kappa v^{\beta})(\alpha\ln u-\beta\ln v-\ln\kappa)\label{eq:PotentialRate}\\
 & =\Lambda(u^{\alpha},\kappa v^{\beta})(\alpha\ln\frac{u}{u_{*}}-\beta\ln\frac{v}{v_{*}}),\nonumber 
\end{align}
where the last equality follows from (\ref{eq:Wegscheider2}). Here,
\[
\Lambda(a,b)=\begin{cases}
\frac{a-b}{\ln a-\ln b}, & \text{if }a,b>0,\text{a\ensuremath{\neq}b},\\
0, & \text{if }a=0\text{ or }b=0,\\
a, & \text{if }a=b>0,
\end{cases}
\]
denotes the logarithmic mean. The equality in (\ref{eq:PotentialRate})
is justified if $u,v\geq0$. Since $u,v\geq0$ will hold a posteriori
(even a uniform positive lower bound), the rate function $f(u,v)=k(u^{\alpha}-\kappa v^{\beta})$
can be replaced with 
\begin{equation}
f_{0}(u,v):=\begin{cases}
f(u,v)=\Lambda(u^{\alpha},\kappa v^{\beta})(\alpha\ln\frac{u}{u_{*}}-\beta\ln\frac{v}{v_{*}}), & \text{if }u,v>0,\\
0, & \text{if }u\leq0\text{ or }v\leq0.
\end{cases}\label{eq:fl}
\end{equation}
We do not know positivity of solutions a priori, so to be precise,
we also use the following very small trick: given $0<(u_{0},v_{0})\in\mathrm{C}^{0,0}$
and the corresponding equilibrium $(u_{*},v_{*})>0$, we show the
bounds (\ref{eq:bounds}) a posteriori, hence, a priori, in (\ref{eq:System2}),
the diffusion coefficients $\mu,\mu_{\Gamma}$ can be replaced by
\[
\mu_{l}^{L}(u) := \mu(\hat{u}^L_l),\quad \mu_{\Gamma,l}^l (u,v) := \mu_\Gamma (\hat{u}^L_l, \hat{v}^L_l),
\]
where
\[
\hat{u}_{l}^{L} :=\begin{cases}
u, & \frac{l}{2}<\left(\frac{u}{u_{*}}\right)^{\alpha}<2L,\\
u_* (\frac{l}{2})^{1/\alpha}, & \left(\frac{u}{u_{*}}\right)^{\alpha}<\frac{l}{2},\\
u_* (2L)^{1/\alpha}, & 2L<\left(\frac{u}{u_{*}}\right)^{\alpha},
\end{cases}
\]
and 
\[
\hat{v}_{l}^{L} :=\begin{cases}
v, & \frac{l}{2}<\left(\frac{v}{v_{*}}\right)^{\alpha}<2L,\\
v_* (\frac{l}{2})^{1/\alpha}, & \left(\frac{v}{v_{*}}\right)^{\alpha}<\frac{l}{2},\\
v_* (2L)^{1/\alpha}, & 2L<\left(\frac{v}{v_{*}}\right)^{\alpha}.
\end{cases}
\]
By the assumptions in Subsection \ref{model}, $\mu_{l}^{L}$ and $\mu_{\Gamma,l}^{L}$
are uniformly bounded and elliptic with 
\begin{align*}
X\cdot\mu_{l}^{L}(x,u)X\geq\underline{\mu}|X|^{2}, & \qquad X\cdot\mu_{l}^{L}(x,u)X'\leq\overline{\mu}\vert X\vert\vert X'\vert,\\
Y\cdot\mu_{\Gamma l}^{L}(y,u,v)Y\geq\underline{\mu}_{\Gamma}|Y|^{2}, & \qquad Y\cdot\mu_{\Gamma,l}^{L}(y,u,v)Y'\leq\overline{\mu}_{\Gamma}\vert Y\vert\vert Y'\vert,
\end{align*}
for a.e. $x\in\Omega$ and $y\in\Gamma$, for all $X,X'\in\mathbb{R}^{d}$
and $Y,Y'\in\mathbb{R}^{d-1}$, and for all $(u,v)\in\mathbb{R}^{2}$
.

Now we actually show that the adapted system (\ref{eq:System2}) with $\mu_l^L, \mu_{\Gamma,l}^L$
has a unique global regular solution $(u,v)$ that satisfies (\ref{eq:bounds}).
But then, 
$$f_{0}(u(t),v(t))=f(u(t),v(t)),\mu_{l}^{L}(u(t))=\mu(u(t)),$$
and 
$$\mu_{\Gamma,l}^{L}(u(t),v(t))=\mu_{\Gamma}(u(t),v(t)),$$ 
for all $t\geq0$. Due to the H\"older regularity of $(u,v)$ in space and time
(\ref{eq:Hoelder}) and due to the local Lipschitzianity of $f$, $\mu$,
and $\mu_{\Gamma}$, $(u,v)$ is also the unique solution of
the original system (\ref{eq:System2}). 

The upper bounds in Lemma \ref{lem:LinftyEst} are shown using the
entropy-producing structure of the system. In the usual proof of a
comparison principle, the dissipation of the $L^{2}$-energy of truncated
solutions is used to show pointwise bounds. Here, the idea is that,
similarly, the dissipation of the free energy of truncated potentials
provides pointwise bounds. 

For this method to work, the solution must be constructed to have an entropy estimate
and it must be sufficiently regular to allow for the truncation.
Here, we are dealing with (fairly) regular solutions and it is straightforward
to do everything rigorously (see (\ref{eq:entropyProduction}) below). Regarding weaker notions of solutions, in \cite{Fischer17}, the corresponding
relative entropy estimate is shown for a general class of reaction-diffusion
systems, for renormalized (and thus for weak) solutions. A similar
result should hold for this simple volume-surface case, even with nonlinear
diffusion (the truncated version may not follow automatically for weak solutions though). In \cite{BFL18}, the
entropy estimate is assumed and used to show the exponential equilibration of the linear diffusion volume-surface
system.

Let $\mathcal{E}$ be the (negative) relative entropy for the system
given by 
\[
\mathcal{E}((u,v);(u_{*},v_{*}))(t)=\int_{\Omega}u_{*}e(\frac{u(t,x)}{u_{*}})\,\mathrm{d}x+\int_{\Gamma}v_{*}e(\frac{v(t,y)}{v_{*}})\,\mathrm{d}y,
\]
where 
\[
e(z)=\begin{cases}
z\ln z-z+1, & z>0,\\
1, & z=0.
\end{cases}
\]
Moreover, let 
\[
\mathcal{E}_{L}((u,v);(u_{*},v_{*}))(t)=L^{1/\alpha}\int_{\Omega}u_{*}e(\frac{u(t,x)}{u_{*}})\,\mathrm{d}x+L^{1/\beta}\int_{\Gamma}v_{*}e(\frac{v(t,y)}{v_{*}})\,\mathrm{d}y,
\]
a relative entropy adapted to the bound $L$. Then define 
\[
u^{L}(t,x)=\begin{cases}
u_{*}, & \left(\frac{u(t,x)}{u_{*}}\right)^{\alpha}\leq L,\\
\frac{u(t,x)}{L^{1/\alpha}}, & \left(\frac{u(t,x)}{u_{*}}\right)^{\alpha}>L,
\end{cases}\quad v^{L}(t,x)=\begin{cases}
v_{*}, & \frac{v(t,x)}{v*}\leq L^{1/\beta},\\
\frac{v(t,x)}{L^{1/\beta}}, & \frac{v(t,x)}{v_{*}}>L^{1/\beta},
\end{cases}
\]
so that
\[
\mathcal{E}_{L}((u^{L},v^{L});(u_{*},v_{*}))(0)=\mathcal{E}_{L}((u_{*},v_{*});(u_{*},v_{*}))=0,
\]
and 
\begin{align*}
 & \frac{d}{dt}\mathcal{E}_{L}((u^{L},v^{L});(u_{*},v_{*}))(t)\\
 = & L^{1/\alpha}\int_{\Omega}\dot{u^{L}}(t,x)e'(\frac{u^{L}(t,x)}{u_{*}})\,\mathrm{d}x+L^{1/\beta}\int_{\Gamma}\dot{v^{L}}(t,y)e'(\frac{v^{L}(t,y)}{v_{*}})\,\mathrm{d}y.
\end{align*}
With the definition 
\[
\xi^{L}(t,x):=e'(\frac{u^{L}(t,x)}{u_{*}})=\begin{cases}
0, & \left(\frac{u(t,x)}{u_{*}}\right)^{\alpha}\leq L,\\
\ln(\frac{u(t,x)}{u_{*}})-\frac{1}{\alpha}\ln L, & \left(\frac{u(t,x)}{u_{*}}\right)^{\alpha}>L,
\end{cases}
\]
and 
\[
\chi^{L}(t,y):=e'(\frac{v^{L}(t,y)}{v_{*}})=\begin{cases}
0, & \frac{v(t,y)}{v_{*}}\leq L^{1/\beta},\\
\ln(\frac{v(t,y)}{v_{*}})-\frac{1}{\beta}\ln L, & \frac{v(t,y)}{v_{*}}>L^{1/\beta},
\end{cases}
\]
this can be written as
\[
\frac{d}{dt}\mathcal{E}_{L}((u^{L},v^{L});(u_{*},v_{*}))(t)=\int_{\Omega}\dot{u}(t,x)\xi^{L}(t,x)\,\mathrm{d}x+\int_{\Gamma}\dot{v}(t,y)\chi^{L}(t,y)\,\mathrm{d}y.
\]
 Due to $(u,v)\in\mathrm{MR}_{q,q_{\Gamma}}^{r}$ and since $e'$
is uniformly Lipschitz on $(u_{*},+\infty)$, $(v_{*},+\infty)$,
\[
(\xi^{L},\chi^{L})\in L^{r}(J_{T};\mathrm{W}^{1,q',q'_{\Gamma}}),
\]
so apply equation (\ref{eq:System2}) to $(\xi^{L},\chi^{L})$ and
add both components to get that, for a.e. $t>0$, 
\begin{align}
\frac{d}{dt}\mathcal{E}_{L}((u^{L},v^{L});(u_{*},v_{*})) & =-\int_{\Omega}\nabla u\cdot\mu_{l}^{L}(u)\nabla\xi^{L}\,\mathrm{d}x-\int_{\Gamma}\nabla_{\Gamma}v\cdot\mu_{\Gamma,l}^{L}(u,v)\nabla\chi^{L}\,\mathrm{d}y\nonumber \\
 & \qquad-\int_{\Gamma_{0}}k\Lambda(u^{\alpha},\kappa v^{\beta})(\alpha\ln\frac{u}{u_{*}}-\beta\ln\frac{v}{v_{*}})(\alpha\xi^{L}-\beta\chi^{L})\,\mathrm{d}y,\nonumber \\
\label{eq:entropyProduction}
\end{align}
where $\Gamma_{0}\subset\Gamma$ is the subset of $\Gamma$, where
$(u,v)\geq0$ . If 
\[
\frac{d}{dt}\mathcal{E}_{L}((u^{L},v^{L});(u_{*},v_{*}))(t)\leq0,
\]
for all $t>0$, then 
\[
\mathcal{E}_{L}((u^{L},v^{L});(u_{*},v_{*}))(t)=\mathcal{E}_{L}((u^{L},v^{L});(u_{*},v_{*}))(0)=0,
\]
hence,
\[
(u^{L},v^{L})(t)=(u_{*},v_{*})
\]
for all $t>0$ ($\mathcal{E}_{L}$ is a strict Lyapunov functional),
and then the upper bound in Lemma \ref{lem:LinftyEst} is proved.
To show this, the terms on the right-hand-side of (\ref{eq:entropyProduction})
can be considered in a pointwise manner. First the reaction term:
for each $t>0$, let 
\begin{align*}
\Gamma_{\xi} & =\{(t,y)\in(0,\infty)\times\Gamma:\xi^{L}(t,y)>0,\chi^{L}(t,y)=0\},\\
\Gamma_{\chi} & =\{(t,y)\in(0,\infty)\times\Gamma:\xi^{L}(t,y)=0,\chi^{L}(t,y)>0\},\\
\Gamma_{\xi,\chi} & =\{(t,y)\in(0,\infty)\times\Gamma:\xi^{L}(t,y),\chi^{L}(t,y)>0\},
\end{align*}
then $\ln\left(\frac{u}{u_{*}}\right)^{\alpha}-\ln\left(\frac{v}{v_{*}}\right)^{\beta}>0$
in $\Gamma_{\xi}$ and $\ln\left(\frac{u}{u_{*}}\right)^{\alpha}-\ln\left(\frac{v}{v_{*}}\right)^{\beta}<0$
in $\Gamma_{\chi}$, so 
\[
(\alpha\ln\frac{u}{u_{*}}-\beta\ln\frac{v}{v_{*}})(\alpha\xi^{L}-\beta\chi^{L})=\begin{cases}
\alpha(\ln\left(\frac{u}{u_{*}}\right)^{\alpha}-\ln\left(\frac{v}{v_{*}}\right)^{\beta})\xi^{L}>0, & \text{in }\Gamma_{\xi},\\
\beta(\ln\left(\frac{v}{v_{*}}\right)^{\beta}-\ln\left(\frac{u}{u_{*}}\right)^{\alpha})\chi^{L}>0, & \text{in }\Gamma_{\chi},\\
(\ln\left(\frac{u}{u_{*}}\right)^{\alpha}-\ln\left(\frac{v}{v_{*}}\right)^{\beta})^{2}>0, & \text{in }\Gamma_{\chi,\xi},
\end{cases}
\]
where in the last line, the adaptation of $\mathcal{E}$ with the
weights $L^{1/\alpha}$, $L^{1/\beta}$ was used. This would not be
necessary if the case that both $u$ and $v$ are pointwise large
can be excluded a priori, an estimate that is often known and employed
in the linear diffusion case, see \cite{Pierre10}. In summary, 
\[
-\int_{\Gamma_{0}}k\Lambda(u^{\alpha},\kappa v^{\beta})(\alpha\ln\frac{u}{u_{*}}-\beta\ln\frac{v}{v_{*}})(\alpha\xi^{L}-\beta\chi^{L})\,\mathrm{d}y\leq0
\]
for the reaction term in (\ref{eq:entropyProduction}).

For the diffusion, the estimate is 
\begin{align*}
-\int_{\Omega}\nabla u\cdot\mu_{l}^{L}(u)\nabla\xi^{L}\,\mathrm{d}x & =-\int_{\Omega_{\xi}=\{x\in\Omega:\left(\frac{u(t,x)}{u_{*}}\right)^{\alpha}\geq L\}}\nabla\sqrt{u}\cdot\mu_{l}^{L}(u)\nabla\sqrt{u}\,\mathrm{d}x\\
 & \leq -\int_{\Omega_{\xi}}\underline{\mu}\vert\nabla\sqrt{u}\vert^{2}\,\mathrm{d}x\leq0,
\end{align*}
and, in the same way,
\[
-\int_{\Gamma}\nabla v\cdot\mu_{l,\Gamma}^{L}(u,v)\nabla\chi^{L}\,\mathrm{d}y\leq -\int_{\Gamma_{\xi}}\underline{\mu}_{\Gamma}\vert\nabla\sqrt{v}\vert^{2}\,\mathrm{d}y\leq0.
\]
This proves the upper bound. The uniform lower bound can be proved by a standard comparison principle
(dissipation for the $L^{2}$-norm): Let $\sigma_{u}:=l^{1/\alpha}$
and $\sigma_{v}:=\kappa l^{1/\beta}$, then $\sigma_{u}\leq u_{0}$
and $\sigma_{v}\leq v_{0}$ and $(\sigma_{u},\sigma_{v})$ is a (stationary)
solution of system (\ref{eq:System1}). Define 
\[
u_{-}(t,x)=\begin{cases}
0, & u(t,x)\geq\sigma_{u},\\
u(t,x)-\sigma_{u}, & u(t,x)>\sigma_{u},
\end{cases}\quad v_{-}(t,x)=\begin{cases}
0, & v(t,y)\geq\sigma_{v},\\
v(t,y)-\sigma_{v}, & v(t,y)>\sigma_{v},
\end{cases}
\]
then $(u_{-},v_{-})(0)=0$ and $(u_{-},v_{-})\in L^{r'}(J_{T};\mathrm{W}^{1,q',q'_{\Gamma}})$
is a suitable test function for (\ref{eq:System2}), so that 
\begin{align*}
 & \frac{\mathrm{d}}{\mathrm{d}t}\left(\int_{\Omega}u_{-}^{2}(t,x)\,\mathrm{d}x+\int_{\Gamma}v_{-}^{2}(t,y)\,\mathrm{d}y\right)+\int_{\Omega}\underline{\mu}\vert\nabla u_{-}\vert^{2}\,\mathrm{d}x+\int_{\Gamma}\underline{\mu}_{\Gamma}\vert\nabla_{\Gamma}v_{-}\vert^{2}\,\mathrm{d}y\\
\leq & -\int_{\Gamma_{0}}k(u^{\alpha}-\kappa v^{\beta})(\alpha u_{-}-\beta v_{-})\,\mathrm{d}y.
\end{align*}
We estimate the reaction term in detail: for each $t>0$, let 
\begin{align*}
\Gamma_{u} & :=\{(t,y)\in(0,\infty)\times\Gamma:u(t,y)<\sigma_{u},v_{-}(t,y)=0\},\\
\Gamma_{v} & :=\{(t,y)\in(0,\infty)\times\Gamma:u_{-}(t,y)=0,v_{-}(t,y)<\sigma_{v}\},\\
\Gamma_{<} & :=\{(t,y)\in(0,\infty)\times\Gamma:u(t,y)<\sigma_{u},v(t,y)<\sigma_{v},u^{\alpha}<\kappa v^{\beta}\},\\
\Gamma_{>} & :=\{(t,y)\in(0,\infty)\times\Gamma:u(t,y)<\sigma_{u},v(t,y)<\sigma_{v},u^{\alpha}>\kappa v^{\beta}\},
\end{align*}
then for some (generic) constant $C>0$, 
\begin{align*}
-\int_{\Gamma_{u}}k(u^{\alpha}-\kappa v^{\beta})(\alpha u_{-}-\beta v_{-})\,\mathrm{d}y & =-\int_{\Gamma_{u}}\alpha k(u^{\alpha}-\sigma_{u}^{\alpha}-\kappa(v^{\beta}-\sigma_{v}^{\beta}))u_{-}\,\mathrm{d}y\leq0,\\
-\int_{\Gamma_{v}}k(u^{\alpha}-\kappa v^{\beta})(\alpha u_{-}-\beta v_{-})\,\mathrm{d}y & \leq-\int_{\Gamma_{v}}\beta\kappa k(v^{\beta}-\sigma_{v}^{\beta})v_{-}\,\mathrm{d}y\leq0,\\
-\int_{\Gamma_{<}}k(u^{\alpha}-\kappa v^{\beta})(\alpha u_{-}-\beta v_{-})\,\mathrm{d}y & \leq\int_{\Gamma_{<}}\beta\kappa k(u^{\alpha}-\sigma_{u}^{\alpha})v_{-}\,\mathrm{d}y,\\
 & \overset{*}{\leq}\int_{\Gamma_{<}}\alpha\beta\kappa k\sigma_{u}^{\alpha-1}u_{-}v_{-}\,\mathrm{dy},\\
 & \leq C\int_{\Gamma_{<}}\vert v_{-}\vert^{2}\,\mathrm{d}y+\frac{\underline{\mu}}{2}\int_{\Omega}\vert\nabla u_{-}\vert^{2}+\vert u_{-}\vert^{2}\,\mathrm{d}x,
\end{align*}
 where $(*)$ follows from the mean value theorem and $\sigma_{u}>u$,
and the last inequality follows from Young's inequality and the trace
estimate $\int_{\Gamma}\vert u_{-}\vert^{2}\,\mathrm{d}y\leq C\Vert u_{-}\Vert_{H^{1}(\Omega)}.$
Similarly, 
\begin{align*}
-\int_{\Gamma_{>}}k(u^{\alpha}-\kappa v^{\beta})(\alpha u_{-}-\beta v_{-})\,\mathrm{d}y & \leq\int_{\Gamma_{>}}\alpha\beta\kappa k\sigma_{v}^{\beta-1}u_{-}v_{-}\,\mathrm{dy}\\
 & \leq C_{>}\int_{\Gamma_{>}}\vert v_{-}\vert^{2}\,\mathrm{d}y+\frac{\underline{\mu}}{2}\int_{\Omega}\vert\nabla u_{-}\vert^{2}+\vert u_{-}\vert^{2}\,\mathrm{d}x.
\end{align*}
Hence, 
\[
\frac{\mathrm{d}}{\mathrm{d}t}\left(\int_{\Omega}u_{-}^{2}(t,x)\,\mathrm{d}x+\int_{\Gamma}v_{-}^{2}(t,y)\,\mathrm{d}y\right)\leq C\left(\int_{\Omega}u_{-}^{2}(t,x)\,\mathrm{d}x+\int_{\Gamma}v_{-}^{2}(t,y)\,\mathrm{d}y\right)
\]
 and thus, by Gronwall's inequality, $(\sigma_{u},\sigma_{v})\leq(u,v)(t)$
for all $t\geq0$. 
\end{proof}



\end{document}